\documentclass[12pt]{amsart}
\usepackage{geometry}   
\usepackage[colorlinks,citecolor = red, linkcolor=blue,hyperindex]{hyperref}
\usepackage{euscript,eufrak,verbatim, mathrsfs}
\usepackage[psamsfonts]{amssymb}
\usepackage{bbm}
\usepackage{graphicx}
 \usepackage{float}
\usepackage{float, tikz}

 \usepackage{extarrows}
 \usepackage[all, cmtip]{xy}

\usepackage{upref, xcolor, dsfont}
\usepackage{amsfonts,amsmath,amstext,amsbsy, amsopn,amsthm}
\usepackage{enumerate}

\usepackage{url}

\usepackage{mathtools}
\usepackage{bookmark}

 \usepackage{euscript}
\usepackage{helvet}         
\usepackage{courier}        
\usepackage{type1cm}        
\usepackage{multicol}        
\usepackage[bottom]{footmisc}

\newtheorem{theorem}{Theorem}[section]
\newtheorem*{theorem*}{Theorem A}
\newtheorem{lemma}[theorem]{Lemma}

\newtheorem{proposition}[theorem]{Proposition}
\newtheorem{corollary}[theorem]{Corollary}
\newtheorem*{definition*}{Definition}
\newtheorem*{remark*}{Remark}

\newtheorem*{observation*}{Observation}

\newtheorem*{assumption*}{Assumption}

\theoremstyle{definition}

\newtheorem{question}{Question}

\theoremstyle{remark}
\newtheorem{remark}{Remark}[section]

\geometry{left=2.8cm,right=2.8cm,top=2.5cm,bottom=2.5cm}

\newcommand{\C}{\mathbb{C}}

\newcommand{\supp}{\mathrm{supp}}

\newcommand{\Hol}{\mathrm{Hol}}
\newcommand{\Harm}{\mathrm{Harm}}

\newcommand{\an}{\text{\, and \,}}

\begin{document}

\title[Bergman spaces and bounded point evaluations]{Complete weighted Bergman spaces have bounded point evaluations}

\author
{Yong Han}
\address
{Yong HAN: College of Mathematics and Statistics, Shenzhen University, Shenzhen 518060, Guangdong, China}
\email{hanyongprobability@gmail.com}

\author
{Yanqi Qiu}
\address
{Yanqi QIU: School of Mathematics and Statistics, Wuhan University, Wuhan 430072, Hubei, China;  Institute of Mathematics, AMSS, Chinese Academy of Sciences, Beijing 100190, China}
\email{yanqi.qiu@hotmail.com}

\author{Zipeng Wang}
\address{Zipeng WANG: College of Mathematics and Statistics, Chongqing University, Chongqing
401331, China}
\email{zipengwang@cqu.edu.cn}

\thanks{Y. Han is supported by the grant NSFC 11688101.  Y. Qiu is supported by grants NSFC Y7116335K1,  NSFC 11801547 and NSFC 11688101. Z. Wang is supported by the grant NSFC 11601296}

\begin{abstract}
Let $\Omega\subset \mathbb{C}$ be an arbitrary domain in the  one-dimensional complex plane equipped with a  positive Radon measure $\mu$.  For any $1\le p< \infty$, it is  shown that the weighted Bergman space  $A^p(\Omega, \mu)$ of holomorphic functions is a Banach space if and only if  $A^p(\Omega, \mu)$  has locally uniformly bounded  point evaluations.  In particular, in the case $p =2$,   any complete Bergman space $A^2(\Omega, \mu)$ is automatically a reproducing kernel Hilbert space.
\end{abstract}

\subjclass[2020]{Primary 30H20,46A30; Secondary 46A30, 46E22}
\keywords{weighted Bergman spaces; bounded point evaluations; reproducing kernel Hilbert spaces}

\maketitle

\setcounter{equation}{0}

\section{The main result}

Let  $\Omega\subset \C$ be an arbitrary domain (i.e., non-empty open connected subset) in the one-dimensional complex plane equipped with a positive Radon measure $\mu$.  For any $1\le p <\infty$, define the Bergman space by
\begin{align}\label{def-berg}
A^p(\Omega, \mu): = \Big\{f \in \Hol(\Omega)\Big| \int_\Omega |f|^p d\mu<\infty \Big\},
\end{align}
where as usual,  $\Hol(\Omega)$ denotes the set of all  holomorphic functions on $\Omega$.   For any $z\in \Omega$, the point evaluation $T_z$ on $A^p(\Omega, \mu)$ is the linear map defined by
\[
  A^p(\Omega, \mu) \ni f \mapsto T_z(f)= f(z).
\]

The space  $A^p(\Omega, \mu)$  is equipped with the natural semi-norm
\begin{align}\label{def-norm}
\| f\|_{A^p(\Omega, \mu)}: = \Big(\int_\Omega |f|^p d\mu\Big)^{1/p}, \quad \forall f \in A^p(\Omega, \mu).
\end{align}
The above semi-norm is a norm  on $A^p(\Omega, \mu)$ if and only if $\Omega$ contains an accumulation point of  the support $\supp(\mu)$ of the measure $\mu$. Indeed,  if $\supp(\mu)$ has no accumulation points in $\Omega$, then the semi-norm  defined in \eqref{def-norm} is not a norm since  there would be a function  $f\in \Hol(\Omega)$ such that $f \not\equiv 0$ but  $f|_{\supp(\mu)} = 0$ and hence $\| f\|_{A^p(\Omega,\mu)}=0$ (for the existence of such functions, see Rudin \cite[Thm. 15.11]{Real-complex-analysis}). Conversely, if $\Omega$ contains an accumulation point of $\supp(\mu)$, then \eqref{def-norm} clearly defines a norm, see Lemma~\ref{lem-mono} below for more details.

The main result of this note is

\begin{theorem}\label{thm-main}
Let  $\Omega\subset \C$ be an arbitrary domain equipped with a positive Radon measure $\mu$ and let $1\le p<\infty$. If $A^p(\Omega, \mu)$ is Banach space, then  for any compact subset $K\subset \Omega$, there exists a constant $\gamma_K> 0$ such that
\begin{align}\label{bdd-pe}
\sup_{z\in K}|f(z)| \le \gamma_K \| f\|_{A^p(\Omega, \mu)}, \, \forall f \in A^p(\Omega, \mu).
\end{align}
\end{theorem}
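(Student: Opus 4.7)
The plan is to apply the closed graph theorem to the natural inclusion
\[
\iota : A^p(\Omega, \mu) \hookrightarrow \Hol(\Omega),
\]
where $\Hol(\Omega)$ is endowed with the Fréchet topology of uniform convergence on compact subsets. Once $\iota$ is known to be continuous, Theorem~\ref{thm-main} is immediate: for any compact $K \subset \Omega$, the seminorm $f \mapsto \sup_{z \in K}|f(z)|$ is continuous on $\Hol(\Omega)$, so composing with $\iota$ yields the desired constant $\gamma_K$ in \eqref{bdd-pe}. The closed graph theorem (in its version for F-spaces) applies because $A^p(\Omega, \mu)$ is Banach by hypothesis and $\Hol(\Omega)$ is a Fréchet space.

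To verify that $\iota$ has closed graph, I would suppose that $f_n \to f$ in $A^p(\Omega, \mu)$ and simultaneously $f_n \to g$ locally uniformly on $\Omega$, and then show $f = g$ in $\Hol(\Omega)$. The $A^p$-convergence supplies a subsequence $(f_{n_k})$ with $f_{n_k}(z) \to f(z)$ for $\mu$-almost every $z \in \Omega$, while locally uniform convergence yields $f_{n_k}(z) \to g(z)$ for every $z \in \Omega$. Hence $f = g$ on a Borel set $E \subset \Omega$ with $\mu(\Omega \setminus E) = 0$.

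The remaining step is to upgrade this $\mu$-almost-everywhere identity to an identity on all of $\Omega$. Since $A^p(\Omega, \mu)$ is Banach, the semi-norm \eqref{def-norm} is in fact a norm, so by the discussion following \eqref{def-norm} (and Lemma~\ref{lem-mono}) there exists $z_0 \in \Omega$ which is an accumulation point of $\supp(\mu)$. For every neighborhood $U$ of $z_0$, the open set $U \setminus \{z_0\}$ meets $\supp(\mu)$, whence $\mu(U \setminus \{z_0\}) > 0$ and consequently $E \cap (U \setminus \{z_0\}) \ne \emptyset$. Thus $z_0$ is an accumulation point of $E$, and since $f - g \in \Hol(\Omega)$ vanishes on $E$, the identity principle forces $f \equiv g$ on $\Omega$, closing the graph.

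The main point, rather than any technical obstacle, is the choice of target space: $\Hol(\Omega)$ with the compact-open topology is precisely the Fréchet space that mediates between $\mu$-almost-everywhere agreement (the natural notion of equality in $A^p(\Omega, \mu)$) and pointwise agreement (the natural notion of equality in $\Hol(\Omega)$). The Banach hypothesis enters twice: once to invoke the closed graph theorem, and once indirectly, through non-degeneracy of the norm, to furnish the accumulation point in $\supp(\mu) \cap \Omega$ that drives the identity principle. Notably, the argument is uniform in $p \in [1,\infty)$ and makes no use of the geometry of $\Omega$ beyond connectedness.
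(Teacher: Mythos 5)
Your proof is correct, and it rests on the same two pillars as the paper's: the closed graph theorem, and the upgrade from ``$f=g$ $\mu$-a.e.''\ to ``$f\equiv g$'' via the identity principle at an accumulation point of $\supp(\mu)$ (which exists because completeness forces the semi-norm to be a norm). The difference is in the packaging. The paper applies the \emph{classical} closed graph theorem, once per compact set, to the restriction map $R_S: A^p(\Omega,\mu)\to H^\infty(S)$ for a relatively compact sub-domain $S$ engineered to contain both $K$ and a disc around the accumulation point $z_0$; this keeps everything in the Banach category and lets the a.e.\ identification be done directly in $L^p(S,\mu)$ using $\mu(S)<\infty$. You instead make a single global application of the closed graph theorem to $\iota: A^p(\Omega,\mu)\hookrightarrow \Hol(\Omega)$, which requires the F-space (Fr\'echet) version of the theorem --- the same version the authors invoke in Remark~\ref{rem-0p1} for $0<p<1$, so this is unobjectionable --- and, since $\mu(\Omega)$ may be infinite, you correctly substitute the a.e.-convergent subsequence extracted from $L^p$-convergence for the paper's direct comparison of $L^p(S,\mu)$ limits. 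What your route buys is economy: one application of the closed graph theorem handles all compact sets simultaneously, and the identity principle is applied once on the connected set $\Omega$ rather than on a carefully chosen $S$. What the paper's route buys is that it never leaves the Banach-space setting for $1\le p<\infty$. Your verification that $z_0$ is an accumulation point of the agreement set $E$ (every punctured open neighborhood of $z_0$ meets $\supp(\mu)$, hence has positive measure, hence meets $E$) is exactly the content of the paper's Lemmas~\ref{lem-supp-baohan} and \ref{lem-mono}, just phrased contrapositively.
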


If \eqref{bdd-pe} holds for any compact subset $K\subset\Omega$, then $A^p(\Omega,\mu)$ is said to have locally uniformly bounded point evaluations.

\begin{remark}
If $A^p(\Omega,\mu)$ has locally uniformly bounded point evaluations, then by a routine argument,  one proves that $A^p(\Omega, \mu)$ is  a Banach space.   Hence Theorem~\ref{thm-main} in fact implies that  $A^p(\Omega, \mu)$ is a Banach space if and only if it has locally uniformly bounded point evaluations.
\end{remark}

\begin{remark}\label{rem-0p1}
The assumption $1\le p<\infty$ in Theorem \ref{thm-main}  is not essential. Replacing the  classical Closed Graph Theorem for Banach spaces in the proof below of Theorem \ref{thm-main} by the one for $F$-spaces (see  \cite[Section 1.4, Cor. 1.7]{kalton}), one immediately obtains the following analogous result for quasi-Banach spaces:  let  $\Omega\subset \C$ be an arbitrary domain equipped with a positive Radon measure $\mu$ and let $0< p< \infty$, then  the weighted Bergman space $A^p(\Omega, \mu)$ is a quasi-Banach space if and only if it has locally uniformly bounded point evaluations.
\end{remark}

\begin{remark}
The vector-valued analogue of Theorem \ref{thm-main} can be proved in the same way.  Let  $\Omega\subset \C$ be an arbitrary domain equipped with a positive Radon measure $\mu$ and let $1\le p<\infty$.   Let $X$ be a Banach space over $\C$. The vector-valued Bergman space $A^p(\Omega, \mu; X)$ is defined as
\[
A^p(\Omega, \mu; X): = \Big\{f: \Omega\rightarrow X\Big| \text{$f$ is holomorphic and\,}\int_\Omega \|f(z)\|_X^p d\mu(z)<\infty \Big\},
\]
equipped with the semi-norm
\[
\| f\|_{A^p(\Omega, \mu; X)}: = \Big(\int_\Omega \|f(z)\|^p d\mu(z)\Big)^{1/p}, \quad \forall f \in A^p(\Omega, \mu; X).
\]  Then  $A^p(\Omega, \mu; X)$ is Banach space if and only if  for any compact subset $K\subset \Omega$, there exists a constant $\gamma_K> 0$ such that
\[
\sup_{z\in K}\|f(z)\|_X \le \gamma_K \| f\|_{A^p(\Omega, \mu; X)}, \, \forall f \in A^p(\Omega, \mu; X).
\]
\end{remark}

\subsection{Historical remarks}

Determining the completeness of weighted Bergman spaces $A^p(\Omega,\mu) (1\le  p< \infty)$ for  a given  Radon measure $\mu$ on a planar domain $\Omega$ is a basic question in the theory of holomorphic function spaces and related topics.   This question is in general very difficult and the situation can be rather complicated.  For instance,  for any bounded domain $\Omega$, there exists a measure $\mu$ on $\Omega$ such that the Bergman space $A^p(\Omega,\mu)$ is dense in $L^p(\Omega,\mu)$ for  $1\leq p<\infty$ (see \cite[Thm. 6]{Bram}). 

 Under the assumption that $A^p(\Omega, \mu)$ is a Banach space and  the point evaluations are  bounded for all points in $\Omega$, then by the Banach-Steinhaus Theorem, the set where the point evaluations are bounded is closed and moreover,  the point evaluations are locally uniformly bounded (as was done in  Arcozzi and Bj\"{o}rn \cite[Prop. 5.8 and Prop. 5.10]{Dominating-sets}). The novelty of Theorem~\ref{thm-main} is the everywhere boundedness of the point evaluations.  

For more discussions about the bounded point evaluations on weighted Bergman spaces and their applications to operator theory, the reader is referred to \cite[\S 2 of Ch. VIII]{conway}, \cite{thomson} and \cite{ASS}.

Let $\Sigma$ be a set and $\mathbb{F}$ be either $\mathbb{R}$ or $\mathbb{C}$. A set $\mathcal{H}$ consisting of bona fide $\mathbb{F}$-valued functions on $\Sigma$ (not the equivalence classes of functions such as in the usual case of $L^2(\Sigma, \nu)$ for some measure $\nu$) is called a reproducing kernel
Hilbert space on $\Sigma$ (RKHS on $\Sigma$) (see, e.g., \cite[pp. 19-20]{halmos} and \cite[Def. 1.1]{paulsen}) if
\begin{itemize}
\item[(1)] $\mathcal{H}$ is a $\mathbb{F}$-vector space endowed with the natural operations of addition and scalar multiplication;
\item[(2)] $\mathcal{H}$ is a Hilbert space;
\item[(3)] all point evaluations on $\mathcal{H}$ are bounded.
\end{itemize}
It is known that there exist Hilbert spaces of functions with  at least one unbounded point evaluation (see  \cite[the solution of Problem 36, pp. 192]{halmos}).
However, Theorem~\ref{thm-main} implies that, in sharp contrast to the definition in  standard textbooks,  in the case of weighted Bergman spaces $A^2(\Omega,\mu)$,   the item (3) in the definition of RHKS is superfluous, since it follows from the completeness of $A^2(\Omega,\mu)$ in the item (2).

\begin{corollary}
Let  $\Omega\subset \C$ be an arbitrary planar domain equipped with a positive Radon measure $\mu$. If $A^2(\Omega, \mu)$ is a Hilbert space, then it is a reproducing kernel Hilbert space.
\end{corollary}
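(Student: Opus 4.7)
The plan is to obtain the corollary as an essentially immediate consequence of Theorem~\ref{thm-main} applied with $p=2$, by verifying the three clauses in the definition of a reproducing kernel Hilbert space on $\Omega$ one by one.

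First I would dispose of clauses (1) and (2). By the very definition \eqref{def-berg}, the elements of $A^2(\Omega,\mu)$ are honest holomorphic functions $f:\Omega\to\C$, not equivalence classes, so clause (1) is automatic. Clause (2) is nothing but the standing hypothesis that $A^2(\Omega,\mu)$ is a Hilbert space. I would also pause to note that the hypothesis forces $\|\cdot\|_{A^2(\Omega,\mu)}$ to be a genuine norm and not merely a seminorm, which, as recalled in the introduction, means that $\Omega$ must contain an accumulation point of $\supp(\mu)$; this is the regime in which Theorem~\ref{thm-main} is meaningful.

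The remaining clause (3), namely that every point evaluation on $A^2(\Omega,\mu)$ is bounded, is exactly where Theorem~\ref{thm-main} is invoked. Since a Hilbert space is in particular a Banach space, the theorem applies with $p=2$ and yields, for every compact $K\subset\Omega$, a constant $\gamma_K>0$ such that
\[
\sup_{z\in K}|f(z)|\le\gamma_K\,\|f\|_{A^2(\Omega,\mu)},\qquad\forall f\in A^2(\Omega,\mu).
\]
Specializing to $K=\{z\}$ for an arbitrary $z\in\Omega$ shows that the linear functional $T_z$ on $A^2(\Omega,\mu)$ is bounded, which is precisely clause (3).

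There is no real obstacle to overcome here beyond Theorem~\ref{thm-main} itself; the proof is a short unwinding of definitions followed by a single appeal to the main theorem. The only point deserving a line of care is the remark that the Hilbert space assumption automatically upgrades the seminorm to a norm, since otherwise the very phrase ``$A^2(\Omega,\mu)$ is a Hilbert space'' would be ambiguous.
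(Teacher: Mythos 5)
Your proof is correct and is exactly the argument the paper intends: the corollary is stated as an immediate consequence of Theorem~\ref{thm-main} with $p=2$, with clauses (1) and (2) of the RKHS definition holding by construction and hypothesis, and clause (3) supplied by the theorem applied to singleton compact sets. Your extra remark that the Hilbert space hypothesis forces the seminorm to be a norm is a sensible point of care and matches the paper's own discussion in the introduction.
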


\subsection{Questions}

\begin{question}
Let  $\Omega\subset \C$ be a planar domain equipped with a positive Radon measure $\mu$ and let $1\le p<\infty$. Assume that all point evaluations are bounded on $A^p(\Omega, \mu)$, does it follow that the point evaluations are locally uniformly bounded  (or equivalently, does it follow that $A^p(\Omega, \mu)$ is a Banach space) ? 
\end{question}

\begin{remark}
Thomson  \cite{thomson} proved  the following striking result: let $\mu$ be a  positive Radon measure on $\C$ with compact support.  Let $P^2(\mu)$ be the $L^2(\mu)$-closure of the space of complex-coefficient polynomials.   Then under the assumption  of the purity (see  Conway \cite[\S 2 of Ch.II, pp. 38]{conway} for its precise definition) of the operator $S_\mu: P^2(\mu)\rightarrow P^2(\mu)$ defined by $S_\mu (f) (z):= zf(z)$, the boundedness of the point evaluation at a point $a\in \C$ implies the uniform boundedness of the point evaluations in a neighborhood of $a$. 
\end{remark}

The weighted harmonic Bergman space  $B^p(\Omega, \mu)$ is defined by
$$
B^p(\Omega, \mu): = \Big\{f \in \Harm(\Omega)\Big| \int_\Omega |f|^p d\mu<\infty \Big\},
$$
where $\Harm(\Omega)$ denotes the set of all  harmonic functions on $\Omega$.

\begin{question}
Let  $\Omega\subset \C$ be an arbitrary domain equipped with a positive Radon measure $\mu$ and let $1\le p<\infty$.
If $B^p(\Omega, \mu)$ is a Banach space, does it follow that $B^p(\Omega, \mu)$ has locally uniformly bounded point evaluations ?
\end{question}

The following partial result is obtained. 
\begin{proposition}\label{prop-harmonic}
Let  $\Omega\subset \C$ be an arbitrary domain equipped with a positive Radon measure $\mu$ and let $1\le p<\infty$.
Assume that  the interior of the set $\supp(\mu)$ is non-empty. If $B^p(\Omega, \mu)$ is a Banach space, then it has locally uniformly bounded point evaluations. 
\end{proposition}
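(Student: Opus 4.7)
The plan is to apply the Closed Graph Theorem to the natural inclusion
\[
J \colon B^p(\Omega, \mu) \longrightarrow \Harm(\Omega),
\]
where $\Harm(\Omega)$ carries the Fr\'echet topology $\tau$ of uniform convergence on compacta, generated by the seminorms $p_K(f) := \sup_K |f|$ as $K$ ranges over compact subsets of $\Omega$. It is standard that $(\Harm(\Omega), \tau)$ is a Fr\'echet space (harmonicity is preserved under locally uniform limits via the mean-value characterization, and $\Omega$ admits an exhaustion by compacta). Since $B^p(\Omega,\mu)$ is Banach by hypothesis, once $J$ is shown continuous the Closed Graph Theorem yields, for each compact $K\subset \Omega$, a constant $\gamma_K$ with
\[
\sup_{z\in K}|f(z)|\;=\;p_K(Jf)\;\le\;\gamma_K\,\|f\|_{B^p(\Omega,\mu)},\qquad f\in B^p(\Omega,\mu),
\]
which is exactly the desired locally uniformly bounded point evaluation property.

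To verify that $J$ has closed graph, I would argue as follows. Suppose $f_n\to f$ in $B^p(\Omega,\mu)$ and $f_n\to g$ in $(\Harm(\Omega),\tau)$. The $L^p(\mu)$-convergence produces a subsequence $f_{n_k}\to f$ pointwise $\mu$-a.e., while the locally uniform convergence gives $f_{n_k}(z)\to g(z)$ for every $z\in\Omega$; hence $f=g$ on a set $E\subset\Omega$ with $\mu(\Omega\setminus E)=0$. Let $U$ be any non-empty open set contained in the interior of $\supp(\mu)$. Every non-empty open subset of $U$ lies in $\supp(\mu)$ and therefore carries strictly positive $\mu$-mass, so $E\cap U$ must be dense in $U$ (any open subset of $U$ disjoint from $E$ would be $\mu$-null, a contradiction). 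Since $f$ and $g$ are continuous on $\Omega$ (being harmonic), this forces $f\equiv g$ on the whole open set $U$. The unique continuation principle for harmonic functions on the connected domain $\Omega$ (via real analyticity) then propagates $f=g$ from $U$ to all of $\Omega$, establishing closed graph.

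The crux---and the only place the hypothesis on $\supp(\mu)$ enters---is the passage from $\mu$-almost-everywhere agreement to agreement on a non-empty open set; without a non-empty interior for $\supp(\mu)$, one cannot invoke harmonic unique continuation, which (unlike the identity theorem for holomorphic functions exploited in Theorem~\ref{thm-main}) genuinely requires an open agreement set rather than just a set with an accumulation point. This is exactly what explains why the general harmonic question stated above remains open. The same hypothesis is also what makes $\|\cdot\|_{B^p(\Omega,\mu)}$ an honest norm in the first place, and the identical argument adapts without change to the vector-valued setting.
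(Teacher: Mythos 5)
Your proof is correct, and it is in essence the same closed-graph argument that the paper uses for Theorem~\ref{thm-main} (the paper omits the proof of Proposition~\ref{prop-harmonic}, stating only that it is ``similar''): in both cases the crux is upgrading $\mu$-a.e.\ equality of the two limits to genuine equality, which for harmonic functions needs agreement on a non-empty open set (supplied by the hypothesis $\mathrm{int}\,\supp(\mu)\neq\emptyset$) rather than merely on a set with an accumulation point. The one structural difference is the target of the closed-graph map: the paper restricts to $H^\infty(S)$ for a relatively compact sub-domain $S$ containing both the given compact $K$ and the relevant piece of $\supp(\mu)$, so that only the classical Banach-space Closed Graph Theorem is needed and the a.e.\ identification comes from $\mu(S)<\infty$; you instead map once into the Fr\'echet space $\Harm(\Omega)$ with the topology of local uniform convergence, invoke the Closed Graph Theorem for $F$-spaces (which the paper itself cites in Remark~\ref{rem-0p1}), and identify the limits via an a.e.-convergent subsequence. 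Your global formulation is slightly cleaner — it handles all compacta simultaneously and avoids having to construct a connected, relatively compact $S\supset K$ meeting the interior of $\supp(\mu)$ — at the mild cost of the $F$-space version of the Closed Graph Theorem; the paper's localized version keeps everything inside classical Banach-space theory. Your closing observation correctly isolates why the hypothesis on $\supp(\mu)$ is needed here but not in the holomorphic case.
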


For domains in higher dimensional complex Euclidean space $\C^d$, the weighted Bergman spaces are defined similarly as in
\eqref{def-berg}.

\begin{question}
Fix any $d\ge 2$ and let  $\Omega\subset \C^d$ be an arbitrary domain equipped with a positive Radon measure $\mu$. If $A^p(\Omega, \mu)$ is a Banach space, does it follow that $A^p(\Omega, \mu)$ has  locally uniformly bounded point evaluations ?
\end{question}

Let  $\Omega\subset \C^d$ be a domain.  We say that a closed subset $E\subset \Omega$ has {\it interior $H^\infty$-uniqueness property} with respect to $\Omega$ if there exists a  relatively compact sub-domain  $S\subset \Omega$ such that  $S\cap E$ is non-empty and  any bounded holomorphic function $f\in \Hol(S)$ with   $f|_{S\cap E} \equiv 0$ must be identically zero on $S$.

\begin{proposition}\label{prop-higher}
Fix any $d\ge 2$ and let  $\Omega\subset \C^d$ be an arbitrary domain equipped with a positive Radon measure $\mu$ such that $\supp(\mu)$ has interior $H^\infty$-uniqueness property with respect to $\Omega$.   If $A^p(\Omega, \mu)$ is a Banach space, then it has locally uniformly bounded point evaluations. 
\end{proposition}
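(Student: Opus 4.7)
The plan is to apply the Closed Graph Theorem to the natural inclusion
\[
\iota: A^p(\Omega,\mu) \hookrightarrow \Hol(\Omega),
\]
where $\Hol(\Omega)$ is endowed with the Fréchet topology of uniform convergence on compact subsets of $\Omega$. Since $A^p(\Omega,\mu)$ is Banach by hypothesis and $\Hol(\Omega)$ is an F-space, continuity of $\iota$ is equivalent to closedness of its graph, and the continuity of $\iota$ is precisely the statement that $A^p(\Omega,\mu)$ has locally uniformly bounded point evaluations.

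To verify the closed graph condition, I would take $(f_n)\subset A^p(\Omega,\mu)$, $f\in A^p(\Omega,\mu)$ and $g\in \Hol(\Omega)$ with $\|f_n-f\|_{A^p(\Omega,\mu)}\to 0$ and $f_n\to g$ uniformly on compact subsets of $\Omega$, and show $f=g$. Passing to a subsequence, one may assume $f_n\to f$ pointwise $\mu$-a.e.\ on $\Omega$, while the same subsequence converges to $g$ everywhere on $\Omega$. Hence $f=g$ $\mu$-a.e., and as both functions are continuous this equality extends to $E:=\supp(\mu)$: for each $x\in E$ every neighbourhood of $x$ intersects the full-$\mu$-measure set $\{f=g\}$, so continuity of $f-g$ forces $f(x)=g(x)$.

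The decisive step is then to propagate $f=g$ from $E$ to all of $\Omega$, and this is where the interior $H^\infty$-uniqueness hypothesis enters. Let $S\subset\Omega$ be the relatively compact sub-domain supplied by the hypothesis, so that $S\cap E\neq\emptyset$ and every bounded holomorphic function on $S$ vanishing on $S\cap E$ is identically zero on $S$. The inclusion $\overline{S}\subset\Omega$ and the compactness of $\overline{S}$ ensure that $h:=f-g$ is bounded on $S$, and the previous paragraph gives $h=0$ on $S\cap E$. The uniqueness hypothesis then yields $h\equiv 0$ on $S$, and the identity principle on the connected domain $\Omega$ promotes this to $f\equiv g$ on $\Omega$. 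Invoking the Closed Graph Theorem for $\iota$ completes the proof.

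The main conceptual obstacle, compared to the one-dimensional Theorem~\ref{thm-main}, lies exactly in this propagation step: in dimension one the Banach hypothesis already forces $\supp(\mu)$ to contain an accumulation point in $\Omega$, and the classical identity principle takes care of extending $f=g$ from $\supp(\mu)$ to $\Omega$; for $d\ge 2$, however, $\mu$ may concentrate on analytically thin sets such as real submanifolds on which any number of distinct holomorphic functions can agree, so a hypothesis tailored to the complex structure is unavoidable. Once the interior $H^\infty$-uniqueness property is invoked in the role played by the one-dimensional identity principle, the rest of the argument is a direct transcription of the closed-graph scheme behind Theorem~\ref{thm-main}.
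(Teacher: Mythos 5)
Your proof is correct and follows essentially the same closed-graph strategy as the paper's (omitted) argument, the only cosmetic difference being that you map into the Fr\'echet space $\Hol(\Omega)$ rather than into $H^\infty(S')$ for an enlarged relatively compact sub-domain $S'$ containing both $K$ and the distinguished sub-domain $S$. Your chain --- $\mu$-a.e.\ equality $\Rightarrow$ equality on $\supp(\mu)$ $\Rightarrow$ equality on $S$ via the interior $H^\infty$-uniqueness hypothesis $\Rightarrow$ equality on $\Omega$ via the identity principle --- is exactly the intended higher-dimensional adaptation of Lemmas~\ref{lem-supp-baohan}--\ref{lem-S}.
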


The proofs of both Proposition~\ref{prop-harmonic} and Proposition~\ref{prop-higher} are similar to that of Theorem~\ref{thm-main} and thus are omitted.

\section{The proof of Theorem \ref{thm-main}}
Our proof of Theorem \ref{thm-main} is self-contained. The  idea is  rather elementary:  suppose that $A^p(\Omega, \mu)$ is a Banach space, then
\begin{itemize}
\item[(1)] The set $\Omega$ contains an accumulation point of $\supp(\mu)$ since  the semi-norm in \eqref{def-norm} is a norm.
\item[(2)] For any {\it relatively compact} sub-domain $S\subset \Omega$ containing an accumulation point of $\supp(\mu)$, the point evalutions on $A^p(\Omega, \mu)$ are uniformly bounded on $S$ (see Lemma \ref{lem-S} for details). More precisely,  there exists a constant $\gamma_S> 0$ such that
\[
\sup_{z\in S} |f(z)| \le \gamma_S \| f\|_{A^p(\Omega, \mu)}.
\]
\item[(3)]The proof of Theorem \ref{thm-main} is then completed since any compact subset $K \subset \Omega$ is contained in a relatively compact sub-domain $S\subset \Omega$ containing an accumulation point of $\supp(\mu)$.
\end{itemize}

\begin{lemma}\label{lem-supp-baohan}
Let $U\subset \Omega$ be an open subset of $\Omega$. 
 If $f\in \Hol(U)$ satisfies $f(z)=0$ for $\mu$-almost every $z\in U$, then $f|_{\mathrm{supp}(\mu) \cap U}  \equiv 0$. 
\end{lemma}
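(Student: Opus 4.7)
The content of this lemma is really a general point-set fact about continuous functions and the support of a Radon measure: holomorphy of $f$ enters only through its continuity. My strategy is a direct contradiction argument based on the definition of $\supp(\mu)$ as the complement of the largest $\mu$-null open set.

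Fix an arbitrary point $z_0 \in \supp(\mu) \cap U$; the goal is to show $f(z_0) = 0$. Suppose, toward a contradiction, that $f(z_0) \neq 0$. Since $f$ is holomorphic, hence continuous, on $U$, there is an open neighborhood $V$ of $z_0$ with $V \subset U$ on which $|f| > |f(z_0)|/2 > 0$. In particular, $\{z \in V : f(z) \neq 0 \} = V$.

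By the definition of the support of a Radon measure, every open neighborhood of a point in $\supp(\mu)$ has strictly positive $\mu$-measure, so $\mu(V) > 0$. On the other hand, the hypothesis that $f$ vanishes $\mu$-a.e.\ on $U$, combined with $V \subset U$, gives
\[
\mu(V) \;=\; \mu\bigl(\{z \in V : f(z) \neq 0\}\bigr) \;=\; 0,
\]
a contradiction. Hence $f(z_0) = 0$, and since $z_0 \in \supp(\mu) \cap U$ was arbitrary, $f|_{\supp(\mu)\cap U} \equiv 0$.

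\textbf{Main obstacle.} There is essentially no obstacle here; the argument is a short contradiction. The only minor point to watch is to shrink the neighborhood so that it sits inside $U$, ensuring the $\mu$-a.e.\ vanishing hypothesis is available on $V$. No use is made of the Bergman norm, of the domain being planar, or of $f$ being holomorphic beyond continuity.
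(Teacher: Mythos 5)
Your proof is correct and is essentially the paper's argument in contrapositive, localized form: both rest on the continuity of $f$ making $\{f\neq 0\}$ open together with the defining property of $\supp(\mu)$ (the paper phrases this as "$\{z\in U: f(z)\neq 0\}$ is a $\mu$-null open set, hence contained in $U\setminus\supp(\mu)$", while you phrase it as "every open neighborhood of a support point has positive measure"). No substantive difference.
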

\begin{proof} 
Let $f \in \Hol(U)$ be a holomorphic function vanishing $\mu$-almost everywhere in $U$. Then  the open subset $\{z\in U: f(z) \ne 0\}$ satisfies 
\[
\mu \big(\{z\in U: f(z) \ne 0\}\big)=0.
\]
  By the definition of the support of a measure, $U\setminus \supp(\mu)$ is the largest  $\mu$-negligible open subset of $U$. Therefore, 
\[
\{z\in U: f(z) \ne 0\} \subset U\setminus \supp(\mu). 
\]  It follows that $f|_{\mathrm{supp}(\mu) \cap U}  \equiv 0$. 
\end{proof}

\begin{lemma}\label{lem-mono}
Let $S\subset \Omega$ be any  sub-domain containing an accumulation point of $\supp(\mu)$.  If $f\in \Hol(S)$ satisfies $f(z) = 0$ for $\mu$-almost every $z\in S$, then $f \equiv 0$ on $S$.
\end{lemma}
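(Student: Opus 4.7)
The plan is to combine the preceding Lemma~\ref{lem-supp-baohan} with the classical identity theorem for holomorphic functions on a connected open set.

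First, I would apply Lemma~\ref{lem-supp-baohan} with $U = S$. Since $f \in \Hol(S)$ vanishes $\mu$-almost everywhere on $S$, that lemma immediately gives $f|_{\supp(\mu)\cap S} \equiv 0$. So the zero set $Z(f) := \{z \in S : f(z) = 0\}$ contains $\supp(\mu) \cap S$.

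Next, I would verify that $Z(f)$ has an accumulation point inside $S$. Let $z_0 \in S$ be an accumulation point of $\supp(\mu)$, which exists by hypothesis. By definition, there is a sequence $\{z_n\} \subset \supp(\mu) \setminus \{z_0\}$ with $z_n \to z_0$. Since $S$ is open and $z_0 \in S$, we have $z_n \in S$ for all sufficiently large $n$, hence $z_n \in \supp(\mu) \cap S \subset Z(f)$ for such $n$. Thus $z_0 \in S$ is an accumulation point of $Z(f)$.

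Finally, since $S$ is a sub-domain (i.e., a connected open subset of $\C$) and $f \in \Hol(S)$ has zeros accumulating at an interior point $z_0 \in S$, the identity principle for holomorphic functions forces $f \equiv 0$ on $S$. There is no real obstacle here; the lemma is essentially a clean repackaging of Lemma~\ref{lem-supp-baohan} followed by the identity theorem, and the only minor point requiring care is ensuring the accumulating sequence can be taken inside $S$, which follows from openness of $S$.
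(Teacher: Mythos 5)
Your proof is correct and follows essentially the same route as the paper's: apply Lemma~\ref{lem-supp-baohan} with $U=S$ to get $f|_{\supp(\mu)\cap S}\equiv 0$, then invoke the identity theorem using the accumulation point of $\supp(\mu)$ in $S$. The extra step you include --- checking that the accumulating sequence eventually lies in $S$ so that $z_0$ is genuinely an accumulation point of the zero set within $S$ --- is left implicit in the paper but is a worthwhile clarification.
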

\begin{proof}
Let $S\subset \Omega$ and $f\in \Hol(S)$ be as given in the lemma. Lemma \ref{lem-supp-baohan} then implies that  $f|_{\mathrm{supp}(\mu) \cap S} \equiv 0$.  By assumption,  $\mathrm{supp}(\mu) \cap S$ contains an accumulation point in $S$, hence the Identity Theorem for holomorphic functions implies that  $f\equiv 0$ on $S$. 
\end{proof}

For any  open subset $U\subset \C$, let  $H^\infty(U)$ denote the space of bounded holomorphic functions on $U$. Then $H^\infty(U)$ is a Banach space equipped with the supremum norm. Clearly, for  any non-empty relatively compact subset $U\subset \Omega$,  the restriction map 
\[
R_U: A^p(\Omega, \mu)\rightarrow H^\infty(U)
\] given by  $R_U(f)= f|_U$ is well-defined.

\begin{lemma}\label{lem-S}
If $A^p(\Omega, \mu)$ is a Banach space, then for any  relatively compact sub-domain $S\subset \Omega$ containing an accumulation point of $\supp(\mu)$, the restriction map
\[
R_S: A^p(\Omega,\mu)\rightarrow H^\infty(S)
\]
  is continuous.
\end{lemma}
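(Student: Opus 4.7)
The plan is to apply the Closed Graph Theorem to the linear map $R_S\colon A^p(\Omega,\mu)\to H^\infty(S)$ between the two Banach spaces. This is the natural strategy: direct norm estimates for $\|f\|_{L^\infty(S)}$ in terms of $\|f\|_{A^p(\Omega,\mu)}$ are hard to produce from the hypotheses alone, but the graph-closedness condition turns out to follow almost immediately from the basic interaction between $L^p$-convergence and uniform convergence, together with Lemma~\ref{lem-mono}.

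First I would check that $R_S$ is well-defined. Since $S$ is relatively compact in $\Omega$, the closure $\overline{S}$ is a compact subset of $\Omega$, and every $f\in \Hol(\Omega)$ is continuous on $\overline{S}$ and therefore bounded on $S$. Thus $f|_S\in H^\infty(S)$ for every $f\in A^p(\Omega,\mu)$, and $R_S$ is a well-defined linear map between Banach spaces (here we use that $A^p(\Omega,\mu)$ is Banach by assumption, and that $\Omega$ contains an accumulation point of $\supp(\mu)$ so that \eqref{def-norm} is indeed a norm).

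Next I would verify that the graph of $R_S$ is closed. Suppose $f_n\to f$ in $A^p(\Omega,\mu)$ and $R_S(f_n)\to g$ in $H^\infty(S)$. I want to show $R_S(f)=g$. From $L^p(\mu)$-convergence $f_n\to f$ one extracts a subsequence $f_{n_k}\to f$ pointwise $\mu$-a.e.\ on $\Omega$. Simultaneously $f_n|_S\to g$ uniformly on $S$, so $f_{n_k}(z)\to g(z)$ for every $z\in S$. Combining these two facts, $f(z)=g(z)$ for $\mu$-almost every $z\in S$. Moreover, $g$ is a uniform limit of holomorphic functions on $S$, hence $g\in \Hol(S)$, so $f-g\in \Hol(S)$ vanishes $\mu$-a.e.\ on $S$.

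The concluding step is the key input: since $S$ contains an accumulation point of $\supp(\mu)$, Lemma~\ref{lem-mono} applied to $f-g\in \Hol(S)$ yields $f\equiv g$ on $S$, i.e.\ $R_S(f)=g$. Thus the graph of $R_S$ is closed, and the Closed Graph Theorem gives the desired continuity. There is no substantive obstacle beyond correctly coupling the $\mu$-a.e.\ subsequential limit with the uniform limit; once that identification is made, Lemma~\ref{lem-mono} closes the argument.
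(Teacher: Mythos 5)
Your proof is correct and follows essentially the same route as the paper: Closed Graph Theorem plus Lemma~\ref{lem-mono} applied to $f-g\in\Hol(S)$. The only (immaterial) difference is in showing $f=g$ $\mu$-a.e.\ on $S$: you extract a $\mu$-a.e.\ convergent subsequence from the $L^p$-convergence, whereas the paper uses $\mu(S)<\infty$ to upgrade the uniform convergence $f_n|_S\to g$ to convergence in $L^p(S,\mu)$ and compares the two $L^p$-limits.
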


\begin{proof}
Fix a  relatively compact sub-domain $S\subset \Omega$ containing an accumulation point of $\supp(\mu)$.
By the Closed Graph Theorem, it suffices to show that the linear map $R_S$  has a closed graph. Take any sequence
\[\{ (f_n, f_n|_S)\}_{n=1}^\infty \subset A^p(\Omega, \mu)\times H^\infty(S)
\]   converging to $(f, g)\in A^p(\Omega, \mu)\times H^\infty(S)$  with repect to the product topology. That is,
\[
\lim_{n\to\infty}\| f_n - f\|_{A^p(\Omega, \mu)} =0 \an \lim_{n\to\infty}  \| f_n|_S - g\|_{H^\infty(S)}=0.
\]
Since $S$ is relatively compact, $\mu(S)<\infty$. It follows that
\[
\lim_{n\to\infty}\int_S | f_n(z) - f(z)|^p d\mu(z) =0 \an  \lim_{n\to\infty}\int_S | f_n(z) - g(z)|^p d\mu(z)  =0.
\]
Hence $
f(z)- g(z)  =0$ for  $\mu$-almost every $z\in S$.  By Lemma \ref{lem-mono}, $f - g \equiv 0$ on $S$. That is,
$
g = f|_S = R_S(f).
$
Thus the graph of  the map $R_S$ is closed and the proof of the lemma is completed.
\end{proof}

\begin{proof}[Proof of Theorem \ref{thm-main}]
Suppose that $A^p(\Omega, \mu)$ is Banach space and let  $K\subset \Omega$ be any compact subset.  Since $A^p(\Omega, \mu)$ is Banach space,  the semi-norm \eqref{def-norm} actually is a norm. Then $\Omega$ must contain an accumulation point, say $z_0\in \Omega$, of the set $\supp(\mu)$.  Let $\delta>0$ be small enough such that $D(z_0, \delta)\subset D(z_0, 2 \delta) \subset \Omega$. Since $\Omega$ is a domain, there exists a relatively compact sub-domain $S\subset \Omega$ with
\[
(D(z_0, \delta) \cup K) \subset  S.
\]
Since $S$ satisfies all assumptions of Lemma  \ref{lem-S} and $A^p(\Omega, \mu)$ is assumed to be a Banach space, by Lemma  \ref{lem-S}, the restriction map $R_S: A^p(\Omega, \mu) \rightarrow H^\infty(S)$ is  continuous. In other words, there exists a constant $\gamma_S>0$ such that
\[
\sup_{z\in K}|f(z)| \le \sup_{z\in S}|f(z)| \le \gamma_S \| f\|_{A^p(\Omega, \mu)}, \, \forall f \in A^p(\Omega, \mu).
\]
This completes the whole proof of the theorem.
\end{proof}


\end{document}